\newcommand{\ef}{ \hfill $ \Box $ \vskip 3mm}
\newcommand{\be}{\begin{equation}}
\newcommand{\ee}{\end{equation}}
\newcommand{\bea}{\begin{eqnarray}}
\newcommand{\eea}{\end{eqnarray}}
\newcommand{\bR}{{\mathbb R}}
\def\nn{\nonumber}
\def\ve{\varepsilon}
\def\la{\lambda}
\def\t{\tilde}
\def\q{\quad}
\def\qq{\qquad}
\def\th{\theta}
\def\G{\Gamma}
\def\dl{\delta}
\def\Dl{\Delta}
\def\ve{\varepsilon}
\def\fm{\mathfrak{m}}
\def\fx{\mathfrak{x}}
\def\ft{\mathfrak{t}}
\def\fz{\mathfrak{z}}
\def\fr{\mathfrak{r}}
\def\fy{\mathfrak{y}}
\def\fs{\mathfrak{s}}
\def\lt{\left}
\def\les{\lesssim}
\def\rt{\right}
\def\i{\infty}
\def \ls{\lesssim}
\def\p{\partial}
\def\f{\frac}
\def\na{\nabla}
\def\al{\alpha}
\def\O{\Omega}
\def\s{\sqrt}
\begin{document}
 \footskip=0pt
 \footnotesep=2pt
\let\oldsection\section
\renewcommand\section{\setcounter{equation}{0}\oldsection}
\renewcommand\thesection{\arabic{section}}
\renewcommand\theequation{\thesection.\arabic{equation}}
\newtheorem{theorem}{\noindent Theorem}[section]
\newtheorem{lemma}{\noindent Lemma}[section]
\newtheorem*{claim}{\noindent Claim}
\newtheorem{proposition}{\noindent Proposition}[section]
\newtheorem*{inequality}{\noindent Weighted Inequality}
\newtheorem{definition}{\noindent Definition}[section]
\newtheorem{remark}{\noindent Remark}[section]
\newtheorem{corollary}{\noindent Corollary}[section]
\newtheorem{example}{\noindent Example}[section]

\title{Some Remarks on Regularity Criteria of Axially Symmetric Navier-Stokes Equations}

\author{Zijin Li $^{a,b,}$\footnote{E-mail:zijinli@smail.nju.edu.cn} ,\quad Xinghong Pan$^{c,}$\footnote{E-mail:xinghong{\_}87@nuaa.edu.cn}\vspace{0.5cm}\\
 \footnotesize $^a$Department of Mathematics and IMS, Nanjing University, Nanjing 210093, China.\\
 \footnotesize $^b$Department of Mathematics, University of California, Riverside, CA, 92521, USA.\\
\footnotesize $^c$Department of Mathematics, Nanjing University of Aeronautics and Astronautics, Nanjing 211106, China.
\vspace{0.5cm}
}

\date{}

\maketitle

\centerline {\bf Abstract}
Two main results will be presented in our paper. First, we will prove the regularity of solutions to axially symmetric Navier-Stokes equations under a $log$ supercritical assumption on the horizontally radial component $u^r$ and vertical component $u^z$, accompanied by a $log$ subcritical assumption on the horizontally angular component $u^\theta$ of the velocity. Second, the precise Green function for the operator $-(\Delta-\frac{1}{r^2})$ under the axially symmetric situation, where $r$ is the distance to the symmetric axis, and some weighted $L^p$ estimates of it will be given. This will serve as a tool for the study of axially symmetric Navier-Stokes equations. As an application, we will prove the regularity of solutions to axially symmetric Navier-Stokes equations under a critical (or a subcritical) assumption on the angular component $w^\theta$ of the vorticity.
\vskip 0.3 true cm

\vskip 0.3 true cm

{\bf Keywords:} axislly symmetric, Navier-Stokes equations, regularity criterion, supercritical assumption, critical assumption, subcritical assumption.
\vskip 0.3 true cm

{\bf Mathematical Subject Classification 2010:} 35Q30, 76N10

\section{Introduction}

The 3D incompressible Navier-Stokes equations are given by
\be\label{1.1}
\lt\{
\begin{aligned}
&\p_t u+u\cdot\na u+\na p-\Dl u=0,\\
&\na\cdot u=0,
\end{aligned}
\rt.
\ee
where $u(x,t)\in\bR^3,p(x,t)\in\bR$ represent the velocity vector and the scalar pressure respectively. The Navier-Stokes equations, which describe the motion of viscous fluid substances, are fundamental nonlinear partial differential equations in nature but are far from being fully understood. The global regularity problem of solutions for the 3D Navier-Stokes equations with smooth initial data remains open and is viewed as one of the most important open questions in mathematics \cite{Fcl1}.

The Navier-Stokes equations have the following scaling property: if $u(x,t),p(x,t)$ are solutions of \eqref{1.1}, then $u^\la(x,t)=\la u(\la x,\la^2 t), p^\la(x,t)=\la^2p(\la x,\la^2t)$ are also solutions. By multiplying both sides of $\eqref{1.1}_1$ with $u$ and integrating the resulted equation on $\bR^3$, we can see that smooth solutions, decaying fast enough at infinity, satisfy the following energy identity:
\be
\f{1}{2}\int_{\bR^3}|u(x,t)|^2dx+\int^t_0\int_{\bR^3}|\na u(x,s)|^2dxds=\f{1}{2}\int|u(x,0)|^2dx.\nn
\ee

This estimate seem to be the only useful a priori estimate for smooth solutions to the Navier-Stokes equations \eqref{1.1}. The main difficulty of proving the global regularity of solutions for the 3D Navier-Stokes equations lies in the fact that the above a prior estimate is supercritical with respect to the invariant scaling of the equations:
\be
\int_{\bR^3}|u^\la(x,t)|^2dx=\la^{-1}\int_{\bR^3}|u(x,t)|^2dx,\nn
\ee
\be
 \int^\i_0\int_{\bR^3}|\na u^{\la}(x,s)|^2dxds=\la^{-1}\int^\i_0\int_{\bR^3}|\na u(x,s)|^2dxds.\nn
\ee

In the cylindrical coordinates $(r,\th, z)$, we have $x=(x_1,x_2,x_3)=(r\cos\th,r\sin\th,z)$ and the axi-symmetric solution of the incompressible Navier-Stokes equations is given as
\[
u=u^r(r,z,t)e_r+u^{\th}(r,z,t)e_{\th}+u^z(r,z,t)e_z,
\]
where the basis vectors $e_r,e_\th,e_z$ are
\[
e_r=(\frac{x_1}{r},\frac{x_2}{r},0),\quad e_\th=(-\frac{x_2}{r},\frac{x_1}{r},0),\quad e_z=(0,0,1).
\]
The components $u^r,u^\th,u^z$ satisfy
\begin{equation}\label{1.2}
\left\{
\begin{aligned}
&\p_t u^r+(b\cdot\nabla)u^r -\frac{(u^\th)^2}{r}+\p_r p=(\Delta-\frac{1}{r^2})u^r, \\
&\p_t u^\th+(b\cdot\nabla) u^\th+\frac{u^\th u^r}{r}=(\Delta-\frac{1}{r^2})u^\th , \\
&\p_t u^z+(b\cdot\nabla)u^z+\p_z p=\Delta u^z ,                                    \\
&b=u^re_r+u^ze_z,\q \nabla\cdot b=\p_ru^r+\frac{u^r}{r}+\p_zu^z=0.
\end{aligned}
\right.
\end{equation}

We can also compute the axi-symmetric vorticity $w=\nabla\times u=w^re_r+w^\th e_\th+w^ze_z$  as follows
\[
w^r=-\p_z u^\th, \ w^\th=\p_z u^r-\p_r u^z,\  w^z=(\p_r+\frac{1}{r})u^\th.
\]
The equations for $w^r, w^\th, w^z$ are
\be\label{e1.3}
\lt\{
\begin{aligned}
&\p_t w^r+(b\cdot\na)w^r-(\Dl-\f{1}{r^2})w^r-(w^r\p_r+w^z\p_z)u^r=0,\\
&\p_t w^\th+(b\cdot\na)w^\th-(\Dl-\f{1}{r^2})w^\th-\f{u^r}{r}w^\th-\f{1}{r}\p_z(u^\th)^2=0,\\
&\p_t w^z+(b\cdot\na)w^z-\Dl w^z-(w^r\p_r+w^z\p_z)u^z=0.
\end{aligned}
\rt.
\ee

Our paper's first aim is to study the regularity of axially symmetric Navier-Stokes equations under a supercritical assumption on the drift term $b$ and a subcritical assumption on the angular component $u^\th$, namely:
\begin{equation}\label{e1.2}
|b|\lesssim\frac{(1+|\ln r|)^{\beta}}{r}, \q |u^\th|\lesssim \frac{(1+|\ln r|)^{-\al}}{r}
\end{equation}
where $0\leq\beta<\al/6$ and without loss of generality, $\al\in(0,1]$ is a small constant. Here is the theorem:
\begin{theorem}\label{Thm1}
Let $(u,p)$ be a suitable weak solution of the axisymmetric Navier-Stokes equation \eqref{1.2} in $\bR^3\times [-1,0]$. Assume that $u$ satisfies \eqref{e1.2}. Then we have
\[
\sup\limits_{(x,t)\in\bR^3\times[-1,0)}|u|< +\i.
\]
\end{theorem}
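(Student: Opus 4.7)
The plan is to rule out singularity on the symmetry axis --- away from $r=0$ the axisymmetric system reduces essentially to 2D Navier--Stokes, so any loss of smoothness must happen at the axis. On that set I would control $u$ through two natural scalars: the angular momentum $\Gamma:=r u^\theta$ and the modified swirl vorticity $\Omega:=w^\theta/r$. The whole strategy is to exploit the gap between the hypotheses, $\alpha-6\beta>0$, by running a weighted parabolic De Giorgi/Moser iteration for $\Omega$ in which the nonlinear source is quadratic in $\Gamma$ and therefore log-subcritical.

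From the $u^\theta$ equation in \eqref{1.2} a standard computation shows
\[
\partial_t \Gamma + (b\cdot\nabla)\Gamma - \Delta \Gamma + \frac{2}{r}\partial_r \Gamma = 0,
\]
an equation with no zero-order term. The maximum principle combined with the hypothesis on $u^\theta$ yields the pointwise bound $|\Gamma(x,t)|\lesssim (1+|\ln r|)^{-\alpha}$, propagated for all $t$; in particular $\Gamma\to 0$ on the axis. From \eqref{e1.3} one similarly derives
\[
\partial_t \Omega + (b\cdot\nabla)\Omega - \Big(\Delta + \frac{2}{r}\partial_r\Big)\Omega = \frac{\partial_z \Gamma^2}{r^4},
\]
so that the whole nonlinear source for $\Omega$ is driven by $\Gamma$ and is controlled by the preceding bound.

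Next I would test this equation with $|\Omega|^{p-2}\Omega\phi^2$ for a parabolic cutoff $\phi$ on a small cylinder about a point on the axis, integrating in the axisymmetric measure $r\,dr\,dz\,dt$. After integration by parts the drift contribution is controlled by $|b|r\lesssim(1+|\ln r|)^\beta$, which is borderline critical; the source, after integration by parts in $z$, is controlled by $\Gamma^2/r^4\lesssim(1+|\ln r|)^{-2\alpha}/r^4$ paired with a term that is absorbable in the dissipation. Feeding the resulting energy inequalities into a parabolic Moser chain $p\to \sigma p\to\sigma^2 p\to\cdots$ upgrades a weighted $L^2$ bound on $\Omega$ to $L^\infty$ near the axis. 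Each step of the iteration loses a log factor with exponent proportional to $\beta$ (from the drift) and gains one with exponent proportional to $\alpha$ (from the source); tracking these through the Sobolev and cutoff constants, $6\beta<\alpha$ is precisely what makes the accumulated exponent diverge to $-\infty$, yielding $\Omega\in L^\infty$. The axisymmetric Biot--Savart law, whose Green function is the second main topic of the paper, then reconstructs $u^r,u^z$ from $w^\theta=r\Omega$ with a uniform pointwise bound, while $u^\theta=\Gamma/r$ is already under control.

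The main obstacle is that the drift is genuinely supercritical: no single energy estimate can absorb $|b|\lesssim (1+|\ln r|)^\beta/r$ into dissipation, regardless of how small $\beta$ is, and because of the $\ln$ the standard rescaling/blowup reduction does not simplify the problem. The proof is therefore forced to reinject the log-decay of $\Gamma$ at every stage of the Moser iteration, using the extra factor $(1+|\ln r|)^{-\alpha}$ from the source to counterbalance the drift's excess log growth. Carrying out the bookkeeping of logarithmic powers through all the Sobolev embeddings, cutoff commutators and iteration indices, and verifying that the total exponent turns negative exactly when $\beta<\alpha/6$, is the central technical difficulty.
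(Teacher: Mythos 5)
Your proposal takes a genuinely different route from the paper, and while the quantities you isolate ($\Gamma=ru^\theta$ and $\Omega=w^\theta/r$, with the correct evolution equations) and the threshold $\beta<\alpha/6$ are the right ones, the mechanism you propose for overcoming the supercritical drift is not actually carried out and, as sketched, would not close.

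The paper's proof (Section~\ref{SEC2}) does \emph{not} run a De Giorgi/Moser iteration against the drift in original variables. It begins with a step you explicitly dismiss: set $\fm$ to be the maximum of $|u|$ up to a fixed time and rescale $u_\fm(\fx,\ft)=\fm^{-1}u(\fx/\fm,\ft/\fm^2)$. After this normalization $\|u_\fm\|_{L^\infty}\leq 1$, so the rescaled drift $b_\fm$ is \emph{uniformly bounded}, and standard parabolic smoothing gives $|\nabla^k u_\fm|\leq C_k$. The ``genuinely supercritical'' obstacle you flag simply disappears in the rescaled picture; the logarithms in the hypothesis are not lost in the rescaling but reappear quantitatively, e.g.\ the crossover radius $\fr_0\approx(\ln\fm)^{-\alpha/2}$ in \eqref{4.4}--\eqref{e4.4} and in the source estimate \eqref{e4.5}. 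This is precisely the quantitative (non-blowup-limit) rescaling that you claim ``does not simplify the problem''; you are right that a blowup/Liouville reduction would wash out the log, but the paper never passes to a limit --- it keeps $\fm$ as a finite parameter and tracks $(\ln\fm)$-powers to the end, closing via the self-improvement $\fm\leq C\fm(\ln\fm)^{-\varepsilon}$.

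With the drift bounded, the paper bounds $\Omega_\fm$ not by iteration but in a single Duhamel step \eqref{e4.60}, using the Gaussian upper bound \eqref{e4.6} for the heat kernel of $\p_\ft-\Delta-\frac{2}{\fr}\p_\fr+b_\fm\cdot\nabla$ (from \cite{Px1}, built on \cite{CL1}); the hypothesis on $u^\theta$ enters through the source $f=\fr^{-2}\p_\fz(u^\theta_\fm)^2$, giving $|\Omega_\fm|\lesssim(\ln\fm)^{-\alpha/3+\varepsilon_1}$. Then $u^r,u^z$ near the axis are recovered by a \emph{local} elliptic $W^{1,q}$ estimate \eqref{e4.12} on a ball of radius $\rho=\fm^{-1}(\ln\fm)^{\beta+\varepsilon_2}$, not by the global Green-function representation of Section~3 (which needs $|w^\theta|\lesssim r^{-2}$, a stronger hypothesis than what Theorem~\ref{Thm1} furnishes); matching this interior estimate with the hypothesis for $r\geq\rho$ gives $|b|\lesssim\fm(\ln\fm)^{-\varepsilon}$ exactly when $2\beta-\alpha/3<0$, i.e.\ $\beta<\alpha/6$.

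The concrete gap in your plan is the assertion that the log-decay of the source ``counterbalances'' the log-growth from the drift inside the Moser chain. Those two terms sit in different places in the Caccioppoli inequality: after integrating by parts the drift produces a term of the form $\int |\Omega|^p\, r\,b\cdot\nabla\phi^2\,dr\,dz\,dt$, whose size $\sim(\ln\frac1{\rho_k})^\beta\rho_k^{-1}$ enters the \emph{homogeneous} part of the estimate at scale $\rho_k$, whereas the source enters only an inhomogeneous tail. Making the source small cannot reduce the iteration constants coming from the drift, and as $\rho_k\to 0$ the factor $(\ln\frac1{\rho_k})^\beta$ diverges, so the homogeneous chain does not obviously close. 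You would also need a starting weighted $L^2$ bound for $\Omega$ in the measure $r\,dr\,dz$ (equivalently $\|w^\theta/\sqrt r\|_{L^2}$), which the energy inequality alone does not give. In short: you correctly diagnose the main difficulty, but the remedy you propose does not address it, and the idea that does --- normalizing by $\fm$ so that the drift becomes bounded while the hypotheses turn into quantitative $(\ln\fm)$-gains --- is absent from the sketch.
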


\qed

\noindent Readers can refer to \cite{CSTY1} for the definition of suitable weak solutions.

Besides, from  the Biot-Savart law, we have that $-\Dl u=\na\times w$. under the axially symmetric situation, we can get
\be\label{1.4}
-(\Dl-\f{1}{r^2})u^r=-\p_z w^\th,\ -(\Dl-\f{1}{r^2})u^\th=\p_z w^r-\p_r w^z,\ -\Dl u^z=\f{1}{r}\p_r(rw^\th).
\ee

We see that the operator $-(\Dl-\f{1}{r^2})$ plays an important part in the relationship between $u$ and $w$, which also appears in \eqref{1.2} and \eqref{e1.3}. So a precise Green function of the operator is necessary for us to study the axially Navier-Stokes equations.

Our second target is to calculate the precise formula of the Green function of the following elliptic operator with a inverse-square potential:
\be
\mathcal{L}:=-\left(\Delta-\frac{1}{r^2}\right) \nn
\ee
and give some weighted $L^p$ estimates of it. Here is the result:
\begin{theorem}\label{Thm2}
The Green function of the operator $\mathcal{L}:=-\left(\Delta-\frac{1}{r^2}\right)$ has the following representation formula
\be\label{GreenFunc}
\Gamma(r,\rho,z-l)=\int^\i_0G(t;r,\rho,z-l) dt,
\ee
where
\be\label{HeatKer}
G(t;r,\rho,z-l)=\frac{1}{4\sqrt{\pi}t^{3/2}}\exp\left(-\frac{r^2+\rho^2+(z-l)^2}{4t}\right)\mathcal{I}_1\left(\frac{r\rho}{2t}\rt)
\ee
is the heat kernel of the operator $\p_t-(\Dl-\f{1}{r^2})$ and $\mathcal{I}_\alpha$ is the \emph{modified Bessel function of first kind} with footnote $\alpha\in\mathbb{R}$.

Besides, we have the following weighted $L^p$ estimates for $\G$.
\be\label{1.8}
\left(\int_{-\infty}^\infty\int_0^\infty|\G(r,\rho,z-l)|^p\f{1}{\rho}d\rho  dl\right)^{1/p}\leq Cr^{1/p-1},\q \text{for}\q 1\leq p< 2,
\ee
\be\label{1.10}
\left(\int_{-\infty}^\infty\int_0^\infty|\G(r,\rho,z-l)|^2\rho d\rho  dl\right)^{1/2}\leq C\s{r},
\ee
and
\be\label{1.9}
\int_{-\infty}^\infty\int_0^\infty|\p_z\G(r,\rho,z-l)|\f{1}{\rho^{\dl}}d\rho  dl\leq Cr^{-\dl},\q \dl\in[0,1),
\ee
\end{theorem}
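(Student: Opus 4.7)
The plan is to prove Theorem \ref{Thm2} in three stages: first derive the heat kernel \eqref{HeatKer}; second pass to the Green function \eqref{GreenFunc}; and finally establish the weighted estimates \eqref{1.8}, \eqref{1.10}, \eqref{1.9}. The key algebraic input for the first two stages is the identity
\[
\Delta\bigl(f(r,z)\,e_\theta\bigr)=\bigl[(\Delta-r^{-2})f\bigr]\,e_\theta,
\]
valid for any smooth axisymmetric scalar $f$, where $\Delta$ denotes the Euclidean Laplacian in $\bR^3$. This conjugates the scalar heat equation $\partial_t f=(\Delta-r^{-2})f$ to the Euclidean vector heat equation $\partial_t U=\Delta U$ with $U=f\,e_\theta$. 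I would solve the latter componentwise with the standard 3D Gaussian kernel, compute the $e_\theta$-component of $U$ at the point $x=(r,0,z)$ for the initial datum $f_0(\rho,l)\,e_\theta$ at $y=(\rho\cos\phi,\rho\sin\phi,l)$, and carry out the angular integration using the Bessel integral $\mathcal I_1(\xi)=(2\pi)^{-1}\int_0^{2\pi}e^{\xi\cos\phi}\cos\phi\,d\phi$, which reproduces exactly \eqref{HeatKer}. The formula $\Gamma=\int_0^\infty G\,dt$ in \eqref{GreenFunc} then gives the Green function of $\mathcal L$ by the Laplace transform identity $\mathcal L^{-1}=\int_0^\infty e^{-t\mathcal L}\,dt$, and can be verified directly by differentiating under the integral sign and using $\mathcal L G=-\partial_t G$ together with the initial condition $G(0^+;\cdot)=\delta$.

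For the weighted estimates, the main tool will be the two-sided asymptotics $\mathcal I_1(\xi)\lesssim \xi$ for $0\le\xi\le1$ and $\mathcal I_1(\xi)\lesssim \xi^{-1/2}e^\xi$ for $\xi\ge1$. Splitting the $t$-integral at the crossover $t_\ast=r\rho/2$ where $\xi:=r\rho/(2t)=1$, I obtain two pointwise bounds
\[
G\lesssim \frac{r\rho}{t^{5/2}}\exp\Bigl(-\frac{r^2+\rho^2+(z-l)^2}{4t}\Bigr)\quad(t>t_\ast),\qquad G\lesssim \frac{1}{\sqrt{r\rho}\,t}\exp\Bigl(-\frac{(r-\rho)^2+(z-l)^2}{4t}\Bigr)\quad(t<t_\ast).
\]
Integrating pointwise in $t$ gives a ``dipole'' bound $\lesssim r\rho/A^{3/2}$, with $A=r^2+\rho^2+(z-l)^2$, from the outer piece, and a ``near-diagonal'' bound $\lesssim (r\rho)^{-1/2}E_1\bigl(d^2/(2r\rho)\bigr)$, with $d^2=(r-\rho)^2+(z-l)^2$ and $E_1$ the exponential integral, from the inner piece; the latter behaves like $(r\rho)^{-1/2}|\ln(d^2/(r\rho))|$ near the diagonal and decays exponentially off-diagonal. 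Plugging these into the weighted norms \eqref{1.8} and \eqref{1.10} and reducing to $r=1$ via the scaling $(r,\rho,z-l)\mapsto(\lambda r,\lambda\rho,\lambda(z-l))$, $\Gamma\mapsto \lambda^{-1}\Gamma$, the estimates follow from elementary Gamma-function and logarithmic one-dimensional integrations. For \eqref{1.9}, I would additionally use $\partial_z G=-\frac{z-l}{2t}G$, which trades the $z$-derivative for an extra $|z-l|/t$ factor; the same splitting then bounds $|\partial_z\Gamma|$ by $\lesssim r\rho|z-l|/A^{5/2}$ in the outer piece and $\lesssim (r\rho)^{-1/2}/d$ in the inner piece, which are integrated against $\rho^{-\delta}\,d\rho\,dl$.

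The main technical hurdle will be the careful bookkeeping of combining the inner (logarithmic) and outer (dipole) contributions in each weighted norm, and handling the transition region $t\sim r\rho$ where neither Bessel asymptotic is sharp; this is absorbed by overlapping the two bounds at $t_\ast$ with a buffer and using the crude estimate $\mathcal I_1(\xi)\le \max(\xi,\xi^{-1/2}e^\xi)$ throughout. The explicit ranges $1\le p<2$ in \eqref{1.8} and $0\le\delta<1$ in \eqref{1.9} will emerge as integrability thresholds of the resulting one-dimensional integrals in $\rho$ and $l$ after the spatial rescaling.
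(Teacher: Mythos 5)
Your proposal reaches the same heat kernel \eqref{HeatKer}, but by a genuinely different route than the paper: you conjugate the scalar operator $\Delta-r^{-2}$ to the $3$D \emph{vector} Laplacian acting on $e_\theta$-valued fields via $\Delta(fe_\theta)=[(\Delta-r^{-2})f]e_\theta$, solve the vector heat equation componentwise with the $3$D Gaussian, and extract $\mathcal I_1$ from the angular integral $\int_0^{2\pi}e^{\xi\cos\phi}\cos\phi\,d\phi=2\pi\mathcal I_1(\xi)$. The paper instead substitutes $v=rf$ to turn the scalar equation into the radial heat equation in $\bR^5$, so that $\mathcal I_1$ arises from the $S^3$-angular integral $\int_{-1}^1 e^{As}\sqrt{1-s^2}\,ds=A^{-1}\mathcal I_1(A)$. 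Both derivations are correct and equally rigorous; yours keeps the dimension fixed and exploits the vorticity/velocity vector structure that is native to the axisymmetric problem, while the paper's lift to five dimensions avoids dealing with a vector Laplacian at all. For the weighted estimates, you use the same crossover $t_\ast=r\rho/2$ and the same two-sided Bessel asymptotics as the paper, but organize things differently: you first produce pointwise ``dipole'' and ``near-diagonal'' bounds for $\Gamma$ by performing the $t$-integration, then reduce to $r=1$ via the scaling $\Gamma(\lambda r,\lambda\rho,\lambda u)=\lambda^{-1}\Gamma(r,\rho,u)$ (which is a clean way to read off the powers $r^{1/p-1}$, $r^{1/2}$, $r^{-\delta}$). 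The paper instead applies Minkowski first, integrates in $(\rho,l)$ before $t$, and for \eqref{1.10} and the $\delta=0$ case of \eqref{1.9} it uses exact Gaussian--Bessel identities $\int_0^\infty e^{-s^2}\mathcal I_1(as)\,ds=a^{-1}(e^{a^2/4}-1)$ and $\int_0^\infty e^{-s^2}\mathcal I_1^2(as)\,s\,ds=\tfrac12 e^{a^2/2}\mathcal I_1(a^2/2)$ rather than asymptotics, which gives cleaner closed-form bounds there. Your sketch leaves the final one-dimensional integrations at $r=1$ unverified (in particular confirming that the logarithmic singularity of $E_1$ on the diagonal is $L^p$-integrable for $p<2$ and that the dipole tail is summable), and these are exactly where the thresholds $p<2$ and $\delta<1$ are enforced; carrying them out would require the same small-exponent bookkeeping the paper does with its auxiliary parameter $\ve$, but the framework you set up is sound.
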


\qed

\begin{remark}
It seems that the 2-dimensional version of the heat kernel \eqref{HeatKer} was firstly calculated by T. Gallay and V. Sverak in \cite{GS}, by using a different approach.
\end{remark}

\qed

\begin{remark}
Readers can see \cite{AS1} for the definition of the modified Bessel function. $\mathcal{I}_1(s)$ have the following formula and asymptotic behavior.
\be\label{1.12}
\mathcal{I}_1(s)=\sum_{m=0}^\infty\frac{1}{m!(m+1)!}\left(\frac{s}{2}\right)^{2m+1}.
\ee
and
\be\label{1.13}
\mathcal{I}_1\left(s\right)\lesssim
\left\{
\begin{array}{ll}
s&,\quad 0<s\leq 1;\\
\frac{e^s}{\sqrt{s}}&,\quad s>1.\\
\end{array}
\right.
\ee
\end{remark}

\qed

As a corollary of \eqref{1.8} and \eqref{1.9}, we have the $\theta-$direction of stream function is bounded under the condition $|\omega^\theta(t,r,z)|\lesssim r^{-2}$ and $|u^r(t;r,z)|\ls \f{1}{r^\dl}$ under the condition $|\omega^\theta(t,r,z)|\lesssim \f{1}{r^{1+\dl}}$ for $\dl\in[0,1)$.
\begin{corollary}\label{Cor1}
Suppose $\omega^\theta=\omega^\theta(t,r,z)$ satisfies $|\omega^\theta(t,r,z)|\lesssim r^{-2}$, then the stream function $L^\theta$ of the velocity field $b=u^r e_r+u^ze_z$, defined by $b:=\na\times (L^\theta e_\th)$, satisfies
\be\label{1.14}
|L^\th|\leq C
\ee
Moreover, the solution $u$ of axially symmetric Navier-Stokes equations is regular.
\end{corollary}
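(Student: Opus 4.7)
\noindent \textbf{Proof proposal for Corollary \ref{Cor1}.}

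The plan is to first identify the PDE satisfied by the stream function. From $b=u^r e_r+u^z e_z=\nabla\times(L^\theta e_\theta)$ and the cylindrical form of the curl, one reads off $u^r=-\partial_z L^\theta$ and $u^z=\frac{1}{r}\partial_r(rL^\theta)$. Substituting these into the definition $\omega^\theta=\partial_z u^r-\partial_r u^z$ and expanding yields
\[
-\Big(\partial_r^2+\tfrac{1}{r}\partial_r+\partial_z^2-\tfrac{1}{r^2}\Big)L^\theta=\omega^\theta,
\]
i.e.\ $\mathcal{L} L^\theta=\omega^\theta$, exactly the operator for which Theorem \ref{Thm2} furnishes the Green function.

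Next, under the mild decay of $\omega^\theta$ coming from the suitable weak solution framework, I would represent $L^\theta$ by convolving against $\Gamma$ in the axisymmetric volume element:
\[
L^\theta(r,z)=\int_{-\infty}^{\infty}\!\int_0^\infty \Gamma(r,\rho,z-l)\,\omega^\theta(\rho,l)\,\rho\,d\rho\,dl.
\]
Using the hypothesis $|\omega^\theta(\rho,l)|\lesssim \rho^{-2}$, the integrand is pointwise controlled by $|\Gamma(r,\rho,z-l)|\cdot \rho^{-1}$, so
\[
|L^\theta(r,z)|\lesssim \int_{-\infty}^{\infty}\!\int_0^\infty |\Gamma(r,\rho,z-l)|\,\frac{1}{\rho}\,d\rho\,dl.
\]
Applying the weighted estimate \eqref{1.8} with $p=1$ gives the right-hand side $\lesssim r^{1/1-1}=1$, which is precisely \eqref{1.14}.

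For the second assertion, once $L^\theta$ is uniformly bounded the drift $b=\nabla\times(L^\theta e_\theta)$ inherits the structural control needed for known axisymmetric regularity criteria (a bounded stream function forces the drift to obey the scale-invariant $|b|\lesssim 1/r$ profile near the axis in a suitable averaged sense, and combined with the usual estimates for $u^\theta$ one can close a Moser-type/CSTY-type argument). I would therefore conclude the regularity of $u$ by invoking an existing criterion of this form from the axisymmetric literature (e.g.\ the type of argument in \cite{CSTY1}).

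The main obstacle is not the $L^\infty$ bound on $L^\theta$---that is a one-line consequence of \eqref{1.8}---but rather the two supporting technical points around it: (a) justifying the Green-function representation of $L^\theta$ rigorously, which requires controlling the behavior of $\omega^\theta$ at infinity and verifying that any harmonic correction vanishes (one may need a cutoff-and-limit argument); and (b) bridging the gap from $|L^\theta|\le C$ to the precise hypotheses of the regularity criterion one invokes, since a pointwise bound on $L^\theta$ does not immediately give a pointwise bound on $b=\nabla\times(L^\theta e_\theta)$ without an additional use of \eqref{1.9} (which yields $|u^r|\lesssim r^{-\delta}$ for $\delta<1$ under a slightly stronger hypothesis on $\omega^\theta$).
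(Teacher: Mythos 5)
Your derivation of the $L^\infty$ bound on $L^\theta$ is exactly the paper's argument: same identification of the stream-function PDE $-(\Delta-\tfrac{1}{r^2})L^\theta=\omega^\theta$, same Green-function representation, same application of the weighted estimate \eqref{1.8} with $p=1$. Up to that point the proposal matches the paper step for step.

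The gap you flag in point (b) is real, and it is precisely where your proposal stalls while the paper closes cleanly. You try to get from $|L^\theta|\le C$ to a pointwise bound on $b$ and then to a CSTY/Moser-type criterion, and you correctly observe that a pointwise bound on $L^\theta$ does not give a pointwise bound on $b=\nabla\times(L^\theta e_\theta)$. The paper avoids this entirely by working at the level of function spaces: since $|L^\theta|\le C$, we have $L^\theta\in L^\infty_t(L^\infty_x)\subset L^\infty_t(BMO(\mathbb{R}^3))$, and because $b$ is a first-order derivative (a curl) of $L^\theta e_\theta$, it follows that $b\in L^\infty_t(BMO^{-1}(\mathbb{R}^3))$. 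At that point the Lei--Zhang criterion \cite{LZ2}, which gives regularity of axisymmetric suitable weak solutions whenever the drift lies in $L^\infty_t BMO^{-1}$, applies directly and finishes the proof. So the missing idea is not a sharper pointwise estimate (indeed \eqref{1.9} only yields $|u^r|\lesssim r^{-\delta}$ with $\delta<1$ and under a strictly stronger hypothesis on $\omega^\theta$, as you note); it is the observation that $L^\infty\hookrightarrow BMO$ lets you lose a derivative and land in $BMO^{-1}$, which is exactly the scale-critical space for which \cite{LZ2} was designed.
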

\qed
\begin{corollary}\label{Cor2}
Suppose $\omega^\theta=\omega^\theta(t,r,z)$ satisfies $|\omega^\theta(t,r,z)|\lesssim \f{1}{r^{1+\dl}}$ for $\dl\in[0,1)$, then the horizontally radial component $u^r$ of the velocity satisfies
\be\label{1.15}
|u^r|\ls \f{1}{r^\dl}.
\ee
Moreover, the solution $u$ of axially symmetric Navier-Stokes equations is regular.
\end{corollary}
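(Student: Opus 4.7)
The plan is to pass from the pointwise hypothesis on $\omega^\theta$ to a pointwise bound on $u^r$ by convolving the identity $\mathcal{L}u^r=-\partial_z \omega^\theta$ from \eqref{1.4} against the Green function $\Gamma$ supplied by Theorem~\ref{Thm2}, and then to invoke a known regularity criterion for axisymmetric Navier--Stokes solutions under a subcritical pointwise bound on $u^r$. The analytic heart of the argument is already contained in the weighted estimate \eqref{1.9}; the remainder is essentially bookkeeping.

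Concretely, using the axial symmetry of $\omega^\theta$ to integrate out the angular variable in the convolution against the $3$-dimensional Green function, I would write
\begin{equation*}
u^r(t,r,z)=-\int_0^{\infty}\!\!\int_{-\infty}^{\infty}\Gamma(r,\rho,z-l)\,\partial_l \omega^\theta(t,\rho,l)\,\rho\,d\rho\,dl.
\end{equation*}
An integration by parts in $l$, whose boundary contributions vanish after a standard cutoff-and-limit argument exploiting the a priori integrability of $\omega^\theta$ for suitable weak solutions, combined with $\partial_l\Gamma=-\partial_z\Gamma$, gives
\begin{equation*}
u^r(t,r,z)=-\int_0^{\infty}\!\!\int_{-\infty}^{\infty}\partial_z\Gamma(r,\rho,z-l)\,\omega^\theta(t,\rho,l)\,\rho\,d\rho\,dl.
\end{equation*}
Inserting the hypothesis $|\omega^\theta|\lesssim\rho^{-(1+\delta)}$ cancels one power of $\rho$, after which \eqref{1.9} applies directly and yields
\begin{equation*}
|u^r(t,r,z)|\lesssim\int_0^{\infty}\!\!\int_{-\infty}^{\infty}|\partial_z\Gamma(r,\rho,z-l)|\,\rho^{-\delta}\,d\rho\,dl\lesssim r^{-\delta},
\end{equation*}
which is precisely \eqref{1.15}.

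For the regularity assertion, note that $|u^r|\lesssim r^{-\delta}$ with $\delta\in[0,1)$ gives $r|u^r|\lesssim r^{1-\delta}$, so $r|u^r|$ is uniformly bounded on any region where $r$ is bounded, in particular in a neighbourhood of the symmetry axis. Since for axisymmetric suitable weak solutions the singular set lies on the axis, this places us in the setting of Chen--Strain--Tsai--Yau type regularity criteria, which preclude axis blow-up under a critical one-scale control of $u^r$; away from the axis the system is essentially two-dimensional parabolic and hence automatically smooth as long as the solution remains bounded. The main obstacle I anticipate is matching the precise form of \eqref{1.15} to an existing off-the-shelf regularity theorem throughout the full range $\delta\in[0,1)$, particularly at the borderline $\delta=0$ where the bound only yields $|u^r|\le C$; in that case I would first bootstrap \eqref{1.15} by feeding it into the equation for $\omega^\theta/r$, upgrade the integrability of the vorticity, and then close via a Serrin- or $\varepsilon$-regularity-type criterion.
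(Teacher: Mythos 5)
Your derivation of the pointwise bound $|u^r|\lesssim r^{-\delta}$ is essentially the paper's, with a cosmetic difference in how the representation formula for $u^r$ is obtained. You start from the Biot--Savart identity $\mathcal{L}u^r=-\partial_z\omega^\theta$ of \eqref{1.4} and integrate by parts in $l$ to move the derivative onto $\Gamma$; the paper instead introduces the stream function $L^\theta$ satisfying $\mathcal{L}L^\theta=\omega^\theta$ and $u^r=-\partial_z L^\theta$, which produces the same formula
\[
u^r(t,r,z)=-\int_{-\infty}^\infty\int_0^\infty\partial_z\Gamma(r,\rho,z-l)\,\omega^\theta(t,\rho,l)\,\rho\,d\rho\,dl
\]
without needing to discuss boundary terms. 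Either way, inserting $|\omega^\theta|\lesssim\rho^{-(1+\delta)}$ and invoking \eqref{1.9} gives \eqref{1.15} exactly as you say, so this half of your argument is correct and equivalent.

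The regularity conclusion is where your argument has a genuine gap, and in fact your diagnosis of the difficulty is inverted. The paper closes by citing Pan's criterion (reference \cite{Px2}): if $r|u^r|\le Cr^\alpha$ for some $\alpha>0$, then the axisymmetric suitable weak solution is regular. Your bound gives $r|u^r|\lesssim r^{1-\delta}$ with $1-\delta\in(0,1]$ for every $\delta\in[0,1)$, so this criterion applies directly across the whole stated range, with no bootstrap needed. In particular $\delta=0$ is the \emph{easy} endpoint (it yields $\alpha=1$, the most subcritical case), whereas the delicate end is $\delta\to1$, where $1-\delta\to0$ and one only just stays inside the hypothesis $\alpha>0$. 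Your fallback of appealing to Chen--Strain--Tsai--Yau does not work as stated either: that criterion requires $r|u|\le C$, i.e.\ control of the full velocity including $u^\theta$ and $u^z$, which the hypothesis of this corollary does not supply. So the correct closing step is simply to invoke the one-component subcritical criterion $r|u^r|\le Cr^{1-\delta}$, $1-\delta>0$; the bootstrap you sketch, while not wrong in spirit, is unnecessary and is aimed at the wrong endpoint.
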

\qed

\begin{remark}
To the best of our knowledge, the result in \textbf{Corollary \ref{Cor1}} was firstly realized by Zhen Lei and Qi S. Zhang \cite{LZ4}, by using a heat kernel estimate derived by Alexander Grigor'yan \cite{Ga1}. We proved here in an alternative way.
\end{remark}
\qed

Before ending our introduction, we recall some regularity results on the axisymmetric Navier-Stokes equations. Under the no swirl assumption, $u^\th=0$ , Ladyzhenskaya \cite{Loa1} and Ukhovskii-Iudovich \cite{UI1} independently proved that weak solutions are regular for all time. When the swirl $u^\th$ is non-trivial, some efforts and progress have been made on the regularity of the axisymmetric solutions. In \cite{CSTY1,CSTY2}, Chen-Strain-Yau-Tsai proved that the suitable weak solutions are regular if the solution satisfies $r|u|\ls 1$. Their method is based on the ones of De Giorgi, Nash and Moser. Also, Koch-Nadirashvili-Seregin-Sverak in \cite{KNSS1} proved the same result, by using a Liouville theorem and scaling-invariant property. Lei-Zhang in \cite{LZ2} proved regularity of the solution under a more general assumption on the drift term $b$ where $b\in L^\i\left([-1,0),BMO^{-1}\right)$. Seregin-Zhou \cite{SZ1} prove that any axially symmetric suitable weak solution $u$, belonging to $L^\i(0,T;\dot{B}^{-1}_{\i,\i})$, is
smooth. Pan \cite{Px1} proved the regularity of solutions under a slightly supercritical assumption on the drift term $b$. Recently, Chen-Fang-Zhang in \cite{CFZ} proved that if $ru^\th$ satisfies $r|u^\th|\leq Cr^\al, \al>0$, then $u$ is regular without any other a prior assumptions. As a complementary of their work, Pan \cite{Px2} proved the regularity of solutions by assuming $r|u^r|\leq Cr^\al$ or $r|u^z|\leq Cr^\al, \al>0$. Later, Lei-Zhang in \cite{LZ1} improved the result in \cite{CFZ} by assuming $r|v^\th|\leq C|\ln r|^{-2}$ for small $r$. Also Wei in \cite{Wei} improved the $log$ power from $-2$ to $-\f{3}{2}$.

When the initial data satisfies some integral conditions, Abidi-Zhang in \cite{AZ1} give the global smooth axially symmetric solutions of 3-D inhomogeneous incompressible Navier-Stokes equations. From the partial regularity theory of \cite{Caffarelli01}, any singular points of the axis-symmetric suitable weak solution can only lie on the symmetric axis. In \cite{Buiker01}, Burke-Zhang give a priori bounds for the vorticity of axially symmetric solutions which indicates that the result of \cite{Caffarelli01} can be applied to a large class of weak solutions. Neustupa and Pokorny \cite{Neustupa01} proved certain regularity of one component (either $u^\th$ or $u^r$) imply regularity of the other components of the solutions. Chae-Lee \cite{CL} proved regularity assuming a zero-dimensional integral norm on $w^\th$: $w^\th\in L^s_tL^q_x$ with $3/q +2/s=2$. Also regularity results come from the work of Jiu-Xin \cite{Jiu01} under the assumption that another zero-dimensional scaled norms $\int_{Q_R}(R^{-1}|w^\th|^2+R^{-3}|u^\th|^2) dz$ is sufficiently small for $R>0$ is small enough. On the other hand, Lei-Zhang \cite{LZ3} give a structure of singularity of 3D axis-symmetric equation near a maximum point. Tian-Xin \cite{Tian01} constructed a family of singular axi-symmetric solutions with singular initial data. Hou-Li \cite{Hou02} construct a special class of global smooth solutions. See also a recent extension: Hou-Lei-Li \cite{Hou01}.

Our paper is organized as follows. In Section 2, we give the proof of \textbf{Theorem \ref{Thm1}}, and Section 3 is devoted to proving \textbf{Theorem \ref{Thm2}}, \textbf{Corollary \ref{Cor1}} and \textbf{Corollary \ref{Cor2}}. Throughout the paper, we use $C$ to denote a generic constant which may be different from line to line. We also apply $A\lesssim B$ to denote $A\leq CB$.

\section{Proof of Theorem 1.1}\label{SEC2}

\q In this section we will prove \textbf{Theorem \ref{Thm1}} and get the regularity of the solution under the assumption \eqref{e1.2}. The idea comes from [Chen-Strain-Tsai-Yau]'s proof in \cite{CSTY2} where they assume $|u|\leq Cr^{-1}$.

We divide the proof into 3 steps.\\
{\bf Step one: scaling of the solution and set up of an equation}

Let $\mathfrak{m}$ be the maxmium of $|u|$ up to a fixed time $t_0$ and we may assume $\fm>1$ is large. Define the scaled solution
\be
u_\fm(\mathfrak{x},\mathfrak{t})=\fm^{-1}u(\frac{\mathfrak{x}}{\fm},\frac{\mathfrak{t}}{\fm^2}),\q \fx=(\fx_1,\fx_2,\fz).  \nn
\ee
Denote $x=(x_1,x_2,z)$ and $\fx=(\fx_1,\fx_2,\fz)$, $r=\s{x^2_1+x^2_2}$ and $\fr=\s{\fx^2_1+\fx^2_1}$. We have the following estimate for $r$ and $\fr$ for time $t<t_0$ and $\ft< \fm^2t_0$:
\be
|\nabla^k u_\fm|\leq C_k.   \label{e4.1}
\ee
This inequality follows from $\|u_\fm\|_{L^\infty}\leq1$ for $t<t_0$ and the standard regularity theorem of Navier-Stokes equations. Its angular component (we omit
the time dependence below) $u^\th_\fm(\fr,\fz)$ satisfies $u^\th_\fm(0,\fz)=0=\p_\fz u^\th_\fm(0,\fz)$ for all $\fz$. By mean value theorem and \eqref{e4.1},
\be
|u^\th_\fm(\fr,\fz)|\lesssim \fr,\ |\p_\fz u^\th_\fm(\fr,\fz)|\lesssim \fr \q \rm{for} \ \fr\leq1. \nn
\ee
Together with \eqref{e4.1} for $\fr\geq1$, we get
\be
|u^\th_\fm|\les \f{\fr}{1+\fr} , \  |\p_\fz u^\th_\fm|\les \f{\fr}{1+\fr}.  \label{e4.2}
\ee
Then $u^\th_\fm(\fr,\fz)$ satisfies the estimate
\be
|u^\th_\fm(\fr,\fz)|=\fm^{-1}\Big|u^\th(\frac{\fx}{\fm},\frac{\ft}{\fm^2})\Big|\lesssim
\frac{(1+|\ln\frac{\fr}{\fm}|)^{-\al}}{\fr}\nn
\ee
Combining this with \eqref{e4.2}, one has
\be
|u^\th_\fm(\fr,\fz)|\les \min\Big\{\f{\fr}{1+\fr},\frac{(1+|\ln\frac{\fr}{\fm}|)^{-\al}}{\fr}\Big\}\nn
\ee
Let $\fr_0$ be such that $\f{\fr_0}{1+\fr_0}=\frac{(1+|\ln\frac{\fr_0}{\fm}|)^{-\al}}{\fr_0}$. It is not hard to see that there exists a constant $C>1$ such that
\be\label{4.4}
C^{-1}(\ln \fm)^{-\al/2}\leq \fr_0\leq C(\ln \fm)^{-\al/2}.
\ee
Then we can rewrite the estimate of $u^\th_\fm$ as follows
\be\label{e4.4}
u^\th_\fm(\fr,\fz)\lesssim\lt\{
\begin{aligned}
&\f{\fr}{1+\fr}\q\q\q\qq 0<\fr\leq \fr_0,\\
&\frac{(1+|\ln\f{\fr}{\fm}|)^{-\al}}{\fr} \q \fr\geq \fr_0.
\end{aligned}\rt.
\ee
Now consider the angular component of the rescaled vorticity. Recall $\Omega=\frac{w_\th}{r}$. Let
\bea
\Omega_\fm(\fx,\ft)=\frac{w^\th_\fm(\fx,\ft)}{\fr}=\fm^{-2}w^\th(\frac{\fx}{\fm},\frac{\ft}{\fm^2})\f{1}{\fr}.  \nn
\eea
Note that $w^\th_\fm$ and $\nabla w^\th_\fm$ are bounded by \eqref{e4.1} and also $w^\th_\fm|_{\fr=0}=0$, so one has
\be
|\O_\fm|\lesssim\frac{1}{1+\fr}.  \nn
\ee
$\O_\fm$ satisfies
\be
(\p_\ft-L)\O_\fm=f,\q L=\Delta+\frac{2}{\fr}\p_\fr-b_\fm\cdot\nabla,     \nn
\ee
where $f=\fr^{-2}\p_\fz(u^\th_\fm)^2$ and $b_\fm=u^\fr_\fm e_\fr+u^\fz_\fm e_\fz$, $|b_m|\leq 1$.\\
Combining the estimates \eqref{e4.2} and \eqref{e4.4}, one has
\be\label{e4.5}
f=\frac{2}{\fr^2}u^\th_\fm\p_\fz u^\th_\fm\les\lt\{
\begin{aligned}
&1\q\qq\qq\q\ \ 0<\fr\leq \fr_0,\\
&\frac{(1+|\ln\f{\fr}{\fm}|)^{-\al}}{(1+\fr)\fr^2} \q \fr\geq \fr_0.
\end{aligned}\rt.
\ee
Let $P(\fx,\ft;\fy,\fs)$ be the kernel of $\p_\ft-L$. By Duhamel's formula
\bea\label{e4.60}
\O_\fm(\fx,\ft)&=&\int P(\fx,\ft;\fy,\fs)\O_\fm(\fy,\fs)d\fy+\int^\ft_\fs\int P(\fx,\ft;\fy,\tau)f(\fy,\tau)d\fy d\tau  \nn  \\
      &:=&I_1+I_2.
\eea
{\bf Step two: bounding of $\O_\fm$ }
\\
In the following ,we will estimate \eqref{e4.60} and give a bound for $\O_\fm(\fx,\ft)$.

The kernel $P(\fx,\ft;\fy,\fs)$ satisfies $P\geq0,\ \int P(\fx,\ft;\fy,\fs) d\fy\leq1$ and
\be
P(\fx,\ft;\fy,\fs)\leq C(\ft-\fs)^{-3/2}\exp\left\{-C\frac{|\fx-\fy|^2}{\ft-\fs}\left(1-\frac{\ft-\fs}{|\fx-\fy|}\right)^2_+\right\}.\label{e4.6}
\ee

The proof of estimate \eqref{e4.6} is based on \cite{CL1}, but due to the singularity of the term $\f{2}{r}\p_r$, the proof is more involved. See Theorem 3 in \cite{Px1}.

Now we give estimates of $P$ in two cases.

From \eqref{e4.6}, it is easy to see that
\be\label{2.8}
P(\fx,\ft;\fy,\fs)\les (\ft-\fs)^{-3/2}\lt\{
\begin{aligned}
&\exp\big\{-c\frac{|\fx-\fy|^2}{\ft-\fs}\big\},\q |\fx-\fy|>2(\ft-\fs);\\
& 1,\qq\qq\qq\qq\ \ |\fx-\fy|\leq 2(\ft-\fs).
\end{aligned}
\rt.
\ee
With the estimate \eqref{2.8} and H\"{o}lder inequality, one gets, when $\ft-\fs>1$,
\bea\label{e4.7}
|I_1|&\leq&\left[\int P(\fx,\ft;\fy,\fs)|\O_\fm(\fy,\fs)|^{2+\dl}d\fy\right]^{\frac{1}{2+\dl}}  \left[\int P(\fx,\ft;\fy,\tau)d\fy\right]^{\frac{1+\dl}{2+\dl}}\nn \\
   &\les&\left[\Big(\int_{|\fx-\fy|>2(\ft-\fs)|}+\int_{|\fx-\fy|<2(\ft-\fs)}\Big) P(\fx,\ft;\fy,\fs)|\O_\fm(\fy,\fs)|^{2+\dl}d\fy\right]^{\frac{1}{2+\dl}}\nn\\
   &\les&(\ft-\fs)^{-\frac{3}{2(2+\dl)}}\left\{\int_{|\fx-\fy|>2(\ft-\fs)}e^{-c\frac{|\fx_3-\fy_3|^2}{\ft-\fs}}\frac{\fr}{(\fr+1)^{2+\dl}}d\fr d\fy_3\rt.\nn\\
   &&\lt.\qq\qq\qq\qq +\int_{|\fx-\fy|<2(\ft-\fs)}\frac{\fr}{(\fr+1)^{2+\dl}}d\fr d\fy_3\right\}^{\f{1}{2+\dl}} \nn  \\
   &\lesssim&(\ft-\fs)^{-\frac{3}{2(2+\dl)}}\left\{(\ft-\fs)^{\frac{1}{2}}+(\ft-\fs)\right\}^{1/(2+\dl)}    \nn \\
   &\lesssim&(\ft-\fs)^{-\frac{1}{2(2+\dl)}}.
\eea
Next
\bea\label{4.90}
|I_2|&\leq&\int^\ft_\fs(\ft-\tau)^{-\frac{3}{2}}\left\{\int_{|\fx-\fy|\leq2(\ft-\tau)}|f|d\fy+\int_{|\fx-\fy|\geq2(\ft-\tau)}e^{-c\frac{|\fx-\fy|^2}{\ft-\tau}}|f|d\fy\right\}d\tau  \nn  \\
    &:=&I_{2,1}+I_{2,2}.
\eea
We deal with $I_{2,1},I_{2,2}$ in \eqref{4.90} as follows,
\bea\label{2.11e}
I_{2,1}&=&\int^\ft_\fs(\ft-\tau)^{-\frac{3}{2}}\int_{|\fx-\fy|\leq2(\ft-\tau)}|f|d\fy d\tau\nn\\
&\lesssim&\int^\ft_\fs(\ft-\tau)^{-\frac{3}{2}}\int^{+\i}_0\sup_{\fy_3}|f|\fr d\fr\int_{|\fx_3-\fy_3|\leq 2(\ft-\tau)}d \fy_3d\tau \nn \\
         &\lesssim&(\ft-\fs)^{1/2}\int^{+\infty}_0\sup_{\tau,\fy_3}|f|\fr d\fr   \nn  \\
         &\lesssim&(\ft-\fs)^{1/2}\left\{\int^{\fr_0}_0 \fr d\fr+\int^\i_{\fr_0}\f{(1+|\ln\f{\fr}{\fm}|)^{-\al}}{(1+\fr)\fr}d\fr \right\}  \nn \\
         &\les&(\ft-\fs)^{1/2}\lt\{\fr^2_0+\int^1_{\fr_0}\f{(1+\ln\f{\fm}{\fr})^{-\al}}{\fr}d\fr\rt.\nn\\
         &&\lt.\qq\q\ +\int^{\fm}_{1}\f{(1+\ln\f{\fm}{\fr})^{-\al}}{\fr^2}d\fr+\int^\i_{\fm}\f{(1+\ln\f{\fr}{\fm})^{-\al}}{\fr^2}d\fr\rt\}\nn\\
\eea
Now we make a brief estimate for the integral on the right hand of \eqref{2.11e}. Since $\fr_0\approx (\ln \fm)^{-\al/2}$, we have
\be
\fr^2_0\ls (\ln \fm)^{-\al}. \nn
\ee
Also
\be
\int^1_{\fr_0}\f{(1+\ln\f{\fm}{\fr})^{-\al}}{\fr}d\fr\ls (1+\ln \fm)^{-\al} \int^1_{\fr_0}\f{1}{\fr}d\fr\ls (\ln \fm)^{-\al+\dl_1}.\nn
\ee
Due the decreasing of $\f{(1+\ln\f{\fm}{\fr})^{-\al}}{\s{\fr}}$ when $\fr\in[1,\fm]$, we have
\be
\int^{\fm}_{1}\f{(1+\ln\f{\fm}{\fr})^{-\al}}{\fr^2}d\fr\ls (1+\ln \fm)^{-\al}\int^{\fm}_{1}\fr^{-3/2}d\fr\ls  (\ln \fm)^{-\al}.\\
\ee
At last
\be
\int^\i_{\fm}\f{(1+\ln\f{\fr}{\fm})^{-\al}}{\fr^2}d\fr\ls \int^\i_{\fm}\f{1}{\fr^2}d\fr\ls \fm^{-1}.\nn
\ee
The above inequalities indicate that for $\dl_1>0$, which is sufficient small and independent on $\fm$, we get
\be\label{4.10}
I_{2,1}\les (\ft-\fs)^{1/2}(\ln \fm)^{-\al+\dl_1}.
\ee
\bea\label{2.12}
I_{2,2}&=&\int^\ft_\fs(\ft-\tau)^{-\frac{3}{2}}\int_{|\fx-\fy|\geq2(\ft-\tau)}e^{-c\f{|\fx-\fy|^2}{\ft-\tau}}|f|d\fy d\tau\nn\\
         &\lesssim&\int^\ft_\fs(\ft-\tau)^{-\frac{3}{2}}\lt(\int e^{-c\f{|\fx-\fy|^2}{\ft-\tau}}d\fy\rt)^{\f{\dl}{1+\dl}}\\
         &&\hskip 3cm\cdot\lt( \int_{|\fx-\fy|\geq2(\ft-\tau)}e^{-c\f{|\fx-\fy|^2}{\ft-\tau}}|f|^{1+\dl}d\fy \rt)^{\f{1}{1+\dl}}d\tau \nn \\
         &\lesssim&\int^\ft_\fs(\ft-\tau)^{-\frac{3}{2}}(\ft-\tau)^{\frac{3}{2}\f{\dl}{1+\dl}}(\ft-\tau)^{\frac{1}{2}\f{1}{1+\dl}}d\tau\lt(\int \sup_{\tau,\fy_3}|f|^{1+\dl}\fr d\fr\rt)^{\f{1}{1+\dl}}  \nn  \\
         &\lesssim&(\ft-\fs)^{\f{\dl}{1+\dl}}\lt(\int \sup_{\tau,\fy_3}|f|^{1+\dl}\fr d\fr\rt)^{\f{1}{1+\dl}}\nn\\
         &\les&(\ft-\fs)^{\f{\dl}{1+\dl}}(\ln\fm)^{-\al+\dl_2},
\eea
where the computation of the integral $\lt(\int \sup_{\tau,\fy_3}|f|^{1+\dl}\fr d\fr\rt)^{\f{1}{1+\dl}}$ is the same as that of $\int\sup_{\tau,\fy_3}|f|\fr$ $d\fr$ in \eqref{2.11e} and $\dl_2>0$ can be sufficient small and is independent on $\fm$.

Now, let $\fs=\ft-(\ln\fm)^{\f{4}{3}\al}>-\fm^2$(hence $\O_\fm$ is defined), From \eqref{e4.7}, \eqref{4.90}, \eqref{4.10}, \eqref{2.12} and by choosing sufficiently small $\dl,\dl_1,\dl_2>0$, we can get that for any small $\ve_1$,
\be
|\O_\fm(\fx,\ft)| \lesssim (\ln\fm)^{-\al/3+\ve_1} .\nn
\ee
{\bf Step three: bounding the solution $u$ from $\O_\fm$}\\
First
\be\label{2.13}
|w^\th(x,t)|= \fm^2|w^\th_\fm(r\fm,z\fm,t\fm^2)|=|\O_\fm(r\fm,z\fm,t\fm^2)|\fm^2r\fm\les \fm^3r(\ln \fm)^{-\al/3+\ve_1}.
\ee
In the following, we bound $b=u^re_r+u^ze_z$.\\
Denote $B_\rho(x_0)=\{x:|x-x_0|<\rho\}$, where $\rho>0$ to be determined later. By Biot-Savart law, $b$ satisfies
\be
-\Delta b={\rm curl}(w_\th e_\th).   \nn
\ee
From the estimates of elliptic equation \cite{GT1}, for $q>1$,
\be \label{e4.12}
\sup\limits_{B_\rho(x_0)}|b|\leq C\left(\rho^{-\frac{3}{q}}\|b\|_{L^q(B_{2\rho}(x_0))}+\rho\sup\limits_{B_{2\rho}(x_0)}|w_\th|\right).
\ee
For a fixed $\rho\ll 1$, to be determined later, set $x_0\in\{(r,\th,z):r<\rho\}$ and $1<q<2$.
By the assumption \eqref{e1.2} on $b$,
\bea\label{4.20}
\rho^{-\frac{3}{q}}\|b\|_{L^q(B_{2\rho}(x_0))}&\leq&\rho^{-\frac{3}{q}}\Big \|\frac{(1+|\ln r|)^{\beta}}{r}\Big\|_{L^q(B_{2\rho}(x_0))} \nn  \\
&\les& \rho^{-\frac{3}{q}}\left[\int^{z_0+2\rho}_{z_0-2\rho}dz\int^{3\rho}_0\frac{|\ln r|^{q\beta}}{r^q}rdr\right]^{\frac{1}{q}} \nn  \\
&\les& \rho^{-\frac{2}{q}}\left[\int^{3\rho}_0\frac{|\ln r|^{q\beta}}{r^{q-1}}dr\right]^{\frac{1}{q}}.
\eea
We compute $\int^{3\rho}_0\frac{|\ln r|^{q\beta}}{r^{q-1}}dr$ as follows,
\bea\label{2.16}
\int^{3\rho}_0\frac{|\ln r|^{q\beta}}{r^{q-1}}dr&=&\int^{+\infty}_{\frac{1}{3\rho}}(\ln r)^{q\beta}r^{q-3}dr \nn \\
&=&\left(\int^{\rho^{-2}}_{\f{1}{3}\rho^{-1}}+\int^{+\infty}_{\rho^{-2}}\right)(\ln r)^{q\beta}r^{q-3}dr       \nn  \\
&\les& (\ln\f{1}{\rho})^{q\beta}\int^{\rho^{-2}}_{\rho^{-1}}r^{q-3}dr+\int^{+\infty}_{\rho^{-2}}r^{q-3+\dl}dr \nn \\
&\les& \rho^{2-q}(\ln\f{1}{\rho})^{q\beta},
\eea
where $\dl>0$ is chosen to be sufficiently small such that $q-3+\dl<-1$. Inserting \eqref{2.16} into \eqref{4.20}, we can get
\be\label{2.17}
\rho^{-\frac{3}{q}}\|b\|_{L^q(B_{2\rho}(x_0))}\les \rho^{-1}\big(\ln\f{1}{\rho}\big)^{\beta}.
\ee
Now we set $\rho=\fm^{-1}(\ln\fm)^{\beta+\ve_2}$, where $0<\ve_2<\f{\al}{2}$. Then \eqref{2.17} implies that
\be\label{2.18}
\rho^{-\frac{3}{q}}\|b\|_{L^q(B_{2\rho}(x_0))}\les\fm(\ln\fm)^{-\ve_2}.
\ee
Also, by \eqref{2.13} and our choice of $\rho$, we have
\be\label{2.19}
\rho\sup\limits_{B_{2\rho}(x_0)}|w_\th|\les \fm(\ln\fm)^{2\beta-\f{\al}{3}+\ve_1+2\ve_2}
\ee
combining \eqref{e4.12}, \eqref{2.18} and \eqref{2.19}, we can get that for a sufficiently small $\ve>0$,\\
\textbf{when $\boldsymbol {r\leq \fm^{-1}(\ln \fm)^{\beta+\ve}}$,}
\be
\boldsymbol{|b(t,r,z)|\les \fm(\ln \fm)^{-\ve}.}\nn
\ee
From our assumption on $b$, we have \\
\textbf{when $\boldsymbol {r\geq \fm^{-1}(\ln \fm)^{\beta+\ve}}$,}
\be
\boldsymbol{|b(t,r,z)|\les \f{(1+|\ln r|)^\beta}{r}\leq \fm(\ln \fm)^{-\ve}.}\nn
\ee
The above two inequalities implies that
\be\label{2.20}
\boldsymbol{|b(t,r,z)|\leq \fm(\ln \fm)^{-\ve}.}
\ee
In the following, we bound $u_\th$. Recall the relationship between $u^\th(x,t)$ and $u^\th_\fm(\fx,\ft)$ and the estimate of $u^\th_\fm$ in \eqref{e4.4}, then we have
\bea
u_\th(r,z)&=&\fm|u^\th_\fm(r\fm,z\fm)|\nn\\
&\les& \fm\left\{
\begin{aligned}
&\f{r\fm}{1+r\fm},\qq\qq r<\frac{\fr_0}{\fm};  \\
&\f{(1+|\ln r|)^{-\al}}{(1+r\fm)r\fm},\qq r\geq\frac{\fr_0}{\fm}.
\end{aligned}
\right.\nn\\
&\les&\lt\{
\begin{aligned}
&\fm\fr_0,   \qq\qq r<\frac{\fr_0}{\fm};\\
&\f{(1+|\ln r|)^{-\al}}{r},\qq \frac{\fr_0}{\fm}\leq r\leq \f{1}{\fm};\\
&\f{(1+|\ln r|)^{-\al}}{r^2\fm},\qq\qq \f{1}{\fm}\leq r\leq 1;\\
&\fm^{-1},\qq\qq\qq r\geq 1.
\end{aligned}
\rt.\nn\\
&\les&\lt\{
\begin{aligned}
&\fm(\ln\fm)^{-\al/2},   \qq\qq r<\frac{\fr_0}{\fm};\\
&\fm(\ln\fm)^{-\al/2},\qq \frac{\fr_0}{\fm}\leq r\leq \f{1}{\fm};\\
&\fm(\ln\fm)^{-\al},\qq\qq \f{1}{\fm}\leq r\leq 1;\\
&\fm^{-1},\qq\qq\qq r\geq 1,
\end{aligned}
\rt.\nn
\eea
which indicates that
\be\label{2.21}
\boldsymbol{|u^\th(r,z)|\les \fm(\ln\fm)^{-\al/2}}.
\ee

Since $\fm$ is the maximum of $|u|$, $\fm=\max\{\sup |b|, \sup|u^\th|\}$. the estimates \eqref{2.20} and \eqref{2.21} together indicate that
\be
\fm\leq C\fm(\ln\fm)^{-\ve}.  \nn
\ee
This gives an upper bound for $\fm$ which completes the proof of \textbf{Theorem \ref{Thm1}}.  \ef

\section{Proof of Theorem 1.2,  Corollary 1.1 and Corollary 1.2}\label{SEC3}
\subsection{Calculation of the Green Function and the Heat Kernel}
This subsection is devoted to deducing the precise formula \eqref{HeatKer} in \textbf{Theorem \ref{Thm2}}. Consider the parabolic equation with a inverse-square potential:
\be\label{3.1}
\Delta v-\frac{1}{r^2}v-v_t=0.
\ee
Here $v=v(t,r,z)$ is axially symmetric. $\Delta=\p^2_r+\frac{1}{r}\p_r+\p^2_z$. Denoting $v=r\cdot f$, it follows that $f=f(t,r,z)$ satisfies
\be\label{3.2}
\Delta f+\frac{3}{r}\p_r f-f_t=0.
\ee
Therefore, if we denote $\Delta_5:=\p^2_{x_1}+\p^2_{x_2}+\p^2_{x_3}+\p^2_{x_4}+\p^2_{z}$ the 5-dimensional Laplacian, and $r=\sqrt{x^2_1+x^2_x+x^2_3+x^2_4}$ the distance between $x':=(x_1,x_2,x_3,x_4)$ and origin in $\mathbb{R}^4$ , then \eqref{3.2} is equivalent to
\be\label{3.3}
\Delta_5f(t,x)-f_t(t,x)=0.
\ee
Here $x=(x_1, x_2, x_3, x_4, z)$. Since the 5-dimensional heat kernel is
\be
G_5(t,x,y):=\frac{1}{(4\pi t)^{5/2}}e^{-\frac{|x-y|^2}{4t}},
\ee
the solution to equation \eqref{3.3} equipped with initial data $f(0,x)=f_0(x)$ has the following representation
\be
f(t,x)=\frac{1}{(4\pi t)^{5/2}}\int_{\mathbb{R}^5}e^{-\frac{|x-y|^2}{4t}}f_0(y)dy.
\ee
Coming back to axial-symmetric case, we assume that
\be
f_0(y)=f_0(\rho,l),
\ee
where $\rho=\sqrt{y_1^2+y_2^2+y_3^2+y_4^2}$, $l=y_5$, then we have
\be\label{1.6}
\begin{split}
f(t,r,z)=&\frac{1}{(4\pi t)^{5/2}}\int_{\mathbb{R}^5}e^{-\frac{|x'-y'|^2+(z-l)^2}{4t}}f_0(\rho,l)dy'dl\\
=&\frac{1}{(4\pi t)^{5/2}}\int_{\mathbb{R}^5}\exp\left(-\frac{r^2+\rho^2-2r\rho\cos\langle x', y'\rangle}{4t}\right)\cdot e^{-\frac{(z-l)^2}{4t}}f_0(\rho,l)dy'dl\\
=&\frac{1}{(4\pi t)^{5/2}}\int_{-\infty}^{+\infty}\int_0^{\infty}\exp\left(-\frac{r^2+\rho^2}{4t}\right)\cdot e^{-\frac{(z-l)^2}{4t}}f_0(\rho,l)\\
&\quad\quad\quad\quad\quad\cdot\left(\int_{|\omega|=1}\exp\left(\frac{r\rho\cos\langle x', y'\rangle}{2t}\right)d\omega\right)\rho^3d\rho dl.
\end{split}
\ee
Here $y'=\rho\cdot\omega$ and $\langle x',y'\rangle$ denotes the angle between vector $x'$ and $y'$. Now we define
\be\label{1.65}
I:=\int_{|\omega|=1}\exp\left(\frac{r\rho\cos\langle x', y'\rangle}{2t}\right)d\omega.
\ee
Since the integral above related only on the angle between $x'$ and $y'$, we assume $x'=(r,0,0,0)$ without loss of generality. In this case $\cos\langle x',y'\rangle=\frac{y_1}{\rho}$ and \eqref{1.65} equals
\be\label{1.7}
\begin{split}
I=&\int_{|\omega|=1}\exp\left(\frac{r\rho\cos\langle x',\omega\rangle}{2t}\right)d\omega\\
=&\int_{-1}^1\exp\left(\frac{r\rho s}{2t}\right)\cdot 4\pi (1-s^2)\frac{ds}{\sqrt{1-s^2}}\\
=&4\pi\int_{-1}^1\exp\left(\frac{r\rho s}{2t}\right)\sqrt{1-s^2}ds\\
=&4\pi^2\left(\frac{r\rho}{2t}\right)^{-1}\mathcal{I}_1\left(\frac{r\rho}{2t}\right).
\end{split}
\ee
Here and below, $\mathcal{I}_\alpha$ is the \emph{Modified Bessel function of first kind} with footnote $\alpha\in\mathbb{R}$. Last equality is due to the following lemma.
\begin{lemma}For any $A>0$, the following equation holds.
\be
J:=\int_{-1}^1e^{As}\sqrt{1-s^2}ds=\frac{1}{A}\mathcal{I}_1(A)
\ee
\end{lemma}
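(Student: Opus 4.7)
The plan is to establish the identity by expanding $e^{As}$ as a power series in $s$, integrating term by term against $\sqrt{1-s^2}$, and matching the resulting series with the explicit expansion of $\mathcal{I}_1$ already recorded in \eqref{1.12}. This avoids any appeal to special-function machinery beyond what the paper has introduced.

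First I would write $e^{As}=\sum_{k\geq 0}\frac{(As)^k}{k!}$ and interchange sum and integral (justified by uniform absolute convergence on the compact interval $[-1,1]$), reducing $J$ to a linear combination of the moments $M_k:=\int_{-1}^1 s^k\sqrt{1-s^2}\,ds$. Because $s^{k}\sqrt{1-s^2}$ is odd in $s$ whenever $k$ is odd, all odd moments vanish, so only the indices $k=2m$ survive.

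Next, for each $m\geq 0$ I would evaluate $M_{2m}$ by the substitution $s=\sin\phi$, which turns $M_{2m}$ into the Wallis-type integral $2\int_0^{\pi/2}\sin^{2m}\phi\cos^2\phi\,d\phi$. This is a standard Beta function value $B(m+\tfrac{1}{2},\tfrac{3}{2})=\Gamma(m+\tfrac{1}{2})\Gamma(\tfrac{3}{2})/\Gamma(m+2)$; using $\Gamma(m+\tfrac{1}{2})=\frac{(2m)!\sqrt{\pi}}{4^{m}m!}$ and $\Gamma(\tfrac{3}{2})=\sqrt{\pi}/2$ collapses $M_{2m}$ to an expression purely in terms of factorials and powers of $4$.

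Substituting back, the $(2m)!$ in $M_{2m}$ cancels the $(2m)!$ from the Taylor expansion of $e^{As}$, and the surviving series has general term proportional to $\frac{1}{m!(m+1)!}(A/2)^{2m}$. Comparing against \eqref{1.12}, namely $\mathcal{I}_1(A)=\sum_{m\geq 0}\frac{1}{m!(m+1)!}(A/2)^{2m+1}$, one recognizes the right-hand side as a constant multiple of $A^{-1}\mathcal{I}_1(A)$, and the claimed identity follows. The only real work is the constant bookkeeping in the last step; there is no analytic obstacle, and the argument is purely algebraic once the moment formula is in hand.
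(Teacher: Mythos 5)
Your series-expansion route is a legitimate alternative to the paper's argument, which instead substitutes $s=\sin\theta$, massages the integral into $\int_0^\pi e^{A\cos\theta}\sin^2\theta\,d\theta$, integrates by parts to reach $\frac{1}{A}\int_0^\pi e^{A\cos\theta}\cos\theta\,d\theta$, and closes by quoting Watson's integral representation of $\mathcal{I}_1$. Your approach is self-contained (it needs only the power series \eqref{1.12} and the Beta integral), whereas the paper's is shorter but leans on a classical Bessel identity.

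However, the ``constant bookkeeping'' you defer to the last step does not come out as you assert, and this matters. The Beta evaluation gives
\begin{equation*}
M_{2m}=\int_{-1}^1 s^{2m}\sqrt{1-s^2}\,ds = B\Big(m+\tfrac12,\tfrac32\Big)=\frac{\pi\,(2m)!}{2\cdot 4^m\,m!\,(m+1)!},
\end{equation*}
so that
\begin{equation*}
J=\sum_{m\ge 0}\frac{A^{2m}}{(2m)!}\,M_{2m}=\frac{\pi}{2}\sum_{m\ge 0}\frac{1}{m!(m+1)!}\Big(\frac{A}{2}\Big)^{2m}=\frac{\pi}{A}\,\mathcal{I}_1(A),
\end{equation*}
not $\frac{1}{A}\mathcal{I}_1(A)$. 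So the identity as printed cannot be recovered by your method, and in fact it is false as stated: sending $A\to 0^+$ gives $\tfrac12$ on the right, while $J(0)=\int_{-1}^1\sqrt{1-s^2}\,ds=\tfrac{\pi}{2}$. The lemma is missing a factor of $\pi$, and the paper's own proof drops the same $\pi$ in its final step when citing Watson, whose integral representation actually reads $\mathcal{I}_1(A)=\frac{1}{\pi}\int_0^\pi e^{A\cos\theta}\cos\theta\,d\theta$. Reassuringly, the downstream use in \eqref{1.7} already carries the correct extra factor ($4\pi\cdot\pi A^{-1}=4\pi^2A^{-1}$), so the heat kernel formula \eqref{HeatKer} is unaffected. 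The upshot: your method is sound, but you should actually carry the constant through rather than assert ``the claimed identity follows'' --- had you done so, you would have caught a typo in the lemma.
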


\begin{proof} Let $s=\sin\theta$, then
\[
\begin{split}
J=&\int_{-\pi/2}^{\pi/2}e^{A\sin\theta}\cos^2\theta d\theta\\
=&\int_{-\pi/2}^{\pi/2}e^{A\cos(\theta-\pi/2)}\cos^2\theta d\theta\\
=&\int_{-\pi}^{0}e^{A\cos\theta}\sin^2\theta d\theta\\
=&\int_{0}^{\pi}e^{A\cos\theta}\sin^2\theta d\theta.
\end{split}
\]
Using integration by parts,
\[
\begin{split}
J=&-\frac{1}{A}\int_0^\pi \sin\theta de^{A\cos\theta}\\
=&\frac{1}{A}\int_0^\pi e^{A\cos\theta}\cos\theta d\theta=\frac{1}{A}\mathcal{I}_1(A).
\end{split}
\]
The lase equality is due to equation (4) on pp. 181 \cite{Watson1995}.
\ef

Substituting \eqref{1.7} to \eqref{1.6}, we have
\be
f(t,r,z)=\int_{-\infty}^\infty\int_0^{+\infty}\frac{1}{4\sqrt{\pi}rt^{3/2}}\exp\left(-\frac{r^2+\rho^2+(z-l)^2}{4t}\right)\mathcal{I}_1\left(\frac{r\rho}{2t}\right)\rho^2f_0(\rho,l)d\rho dl.
\ee
Thus the heat kernel to equation \eqref{3.2} is
\be
\t{G}(t;r,\rho,z-l):= \frac{1}{4\sqrt{\pi}rt^{3/2}}\exp\left(-\frac{r^2+\rho^2+(z-l)^2}{4t}\right)\mathcal{I}_1\left(\frac{r\rho}{2t}\right)\rho.
\ee
Denoting $G$ the heat kernel to \eqref{3.1}, using the relation of $v$ and $f$, it follows
\be
G(t;r,\rho,z-l)=\frac{r}{\rho}\t{G}(t;r,\rho,z-l),
\ee
we have
\be
G(t;r,\rho,z-l)=\frac{1}{4\sqrt{\pi}t^{3/2}}\exp\left(-\frac{r^2+\rho^2+(z-l)^2}{4t}\right)\mathcal{I}_1\left(\frac{r\rho}{2t}\right).
\ee
That is, if \eqref{3.1} equipped with initial data $v(0,r,z)=v_0(r,z)$, we have
\be
v(t,r,z)=\int_{-\infty}^{+\infty}\int_0^{+\infty}G(t;r,\rho,z-l)\cdot v_0(\rho,l)\rho d\rho dl.
\ee
This proves \eqref{HeatKer}. By a direct integration on $t$, we can get \eqref{GreenFunc}.
\end{proof}

\subsection{Proof of the Weighted $L^p$ Estimates}
\subsubsection*{\noindent Calculation for \eqref{1.8} and \eqref{1.9} for $\boldsymbol{\dl\in(0,1)}$}

First, we come to calculate
\be
I:=\lt(\int^{+\i}_{-\i}\int^{+\i}_0|\G(r,\rho,z-l)|^p\f{1}{\rho} d\rho dl\rt)^{1/p}.\nn
\ee
From the formula \eqref{GreenFunc} and \eqref{HeatKer}, we can get
\be
\begin{aligned}
&I\ls \lt(\int^{+\i}_{-\i}\int^{+\i}_0\Big|\int^\i_0t^{-3/2}\exp (-\f{\rho^2+r^2+|z-l|^2}{4t})\mathcal{I}_1(\f{\rho r}{2t})dt\Big|^p\f{1}{\rho} d\rho dl\rt)^{1/p}\nn
\end{aligned}
\ee

Using the estimate \eqref{1.13}, we decompose the integration on $t$ into two parts: $t\geq \f{\rho r}{2}$ and $t<\f{\rho r}{2}$,
\be\label{3.16}
\begin{aligned}
&I\ls  \lt(\int^{+\i}_{-\i}\int^{+\i}_0\Big|\int^\i_{\f{\rho r}{2}}t^{-3/2}\exp\Big(-\f{\rho^2+r^2+|z-l|^2}{4t}\Big)\f{\rho r}{2t}dt\Big|^p\f{1}{\rho} d\rho dl\rt)^{1/p}\\
&\q+ \lt(\int^{+\i}_{-\i}\int^{+\i}_0\Big|\int^{\f{\rho r}{2}}_0t^{-3/2}\exp \Big(-\f{|\rho-r|^2+|z-l|^2}{4t}\Big)\s{\f{2 t}{\rho r}}dt\Big|^p\f{1}{\rho} d\rho dl\rt)^{1/p}\\
&\q\ls  \int^\i_0\lt(\int^{+\i}_{-\i}\int^{+\i}_0\Big|t^{-3/2}\exp \Big(-\f{\rho^2+r^2+|z-l|^2}{4t}\Big)\f{\rho r}{2t}\Big|^p\f{1}{\rho} d\rho dl\rt)^{1/p}dt\\
&\q+ \lt(\int^{+\i}_{-\i}\int^{+\i}_0\Big|(\rho r)^{-1/2}\int^{\f{2\rho r}{|\rho-r|^2+|z-l|^2}}_0s^{-1}\exp (-\f{1}{s})ds\Big|^p\f{1}{\rho} d\rho dl\rt)^{1/p},\\
\end{aligned}
\ee
where we have used the Minkovski inequality for the first term  and variable change for the second term on the righthand side of \eqref{3.16}. Continuing computations indicate that
\be
\begin{aligned}
&I\ls  r\int^\i_0t^{-5/2}\exp\big(-\f{r^2}{4t}\big)\lt(\int^{+\i}_{-\i}\int^{+\i}_0\exp \Big(-\f{p(\rho^2+|z-l|^2)}{4t}\Big)\rho^{p-1} d\rho dl\rt)^{1/p}dt\\
&\q+ \lt(\int^{+\i}_{-\i}\int^{+\i}_0(\rho r)^{-p/2}\Big(\f{2\rho r}{|\rho-r|^2+|z-l|^2}\Big)^{\ve p}\f{1}{\rho} d\rho dl\rt)^{1/p},\q (\ve>0),
\end{aligned}\nn
\ee
where we have used the fact $s^{-\ve}e^{-\f{1}{s}}\ls 1$ for $\ve>0$. By integration on $\rho$ and $l$, we obtain
\be
\begin{aligned}
&I\ls  r\int^\i_0t^{-5/2}\exp\big(-\f{r^2}{4t}\big)t^{\f{p+1}{2p}}dt\\
&\q\q+ r^{-1/2+\ve}\lt(\int^{+\i}_{-\i}\int^{+\i}_0\f{1}{(|\rho-r|+|z-l|)^{2\ve p}}\f{1}{\rho^{p/2-\ve p+1}} d\rho dl\rt)^{1/p}\\
&\q\ls  r^{\f{1}{p}-1}\int^\i_0e^{-s}s^{-1/2p}ds\\
&\q\q+ r^{-1/2+\ve}\lt(\int^{+\i}_0\f{1}{|\rho-r|^{2\ve p-1}}\f{1}{\rho^{p/2-\ve p+1}} d\rho \rt)^{1/p}\q(\text{assume}\ 2\ve p-1>0)\\
&\q\ls  r^{\f{1}{p}-1},
\end{aligned}
\ee
provided that
\be
\lt\{
\begin{aligned}
&1/2p<1,\\
&0<2\ve p-1<1,\\
&p/2-\ve p+1<1,\\
&p/2-\ve p+1+2\ve p-1>1,
\end{aligned}
\rt.\nn
\ee
which is equivalent to
\be\label{3.18}
\lt\{
\begin{aligned}
&p>1/2,\\
&\f{1}{1/2+\ve}<p<\f{1}{\ve},\\
&\ve>\f{1}{2},
\end{aligned}
\rt.
\ee
For any $1\leq p<2$, we can choose an $\ve$ such that \eqref{3.18} is satisfied.\ef

From \eqref{HeatKer}, we have
\be
\p_zG(t;r,\rho,z-l)=C\frac{z-l}{t^{5/2}}\cdot\exp\left(-\frac{r^2+\rho^2+(z-l)^2}{4t}\right)\mathcal{I}_1\left(\frac{r\rho}{2t}\right).\nn
\ee
Define $J:=\int^{+\i}_{-\i}\int^{+\i}_0|\p_z\G(r,\rho,z-l)|\f{1}{\rho^\dl}d\rho dl$. The estimate of $J$ will be essentially the same as $I$ by  one more $t^{-1/2}$ coming out. We give a brief review. First we see that
\be
|\p_zG(t;r,\rho,z-l)|\ls t^{-2}\cdot\exp\left(-\frac{r^2+\rho^2}{4t}-\f{(z-l)^2}{8t}\right)\mathcal{I}_1\left(\frac{r\rho}{2t}\right).\\
\ee
Then,
\be
J\ls \int^{+\i}_{-\i}\int^{+\i}_0\Big|\int^\i_0t^{-2}\exp\left(-\frac{r^2+\rho^2}{4t}-\f{(z-l)^2}{8t}\right)\mathcal{I}_1\left(\frac{r\rho}{2t}\right)dt\Big|\f{1}{\rho^\dl} d\rho dl.\nn
\ee
Almost the same as \eqref{3.16}, we have
\be
\begin{aligned}
J&\ls  \int^{+\i}_{-\i}\int^{+\i}_0\left|\int^\i_{\f{\rho r}{2}}t^{-2}\exp\left(-\frac{r^2+\rho^2}{4t}-\f{(z-l)^2}{8t}\right)\f{\rho r}{2t}dt\right|\f{1}{\rho^\dl} d\rho dl\\
&\q+ \int^{+\i}_{-\i}\int^{+\i}_0\left|\int^{\f{\rho r}{2}}_0t^{-2}\exp \Big(-\f{|\rho-r|^2+1/2|z-l|^2}{4t}\Big)\s{\f{2 t}{\rho r}}dt\right|\f{1}{\rho^\dl} d\rho dl\\
&\ls  \int^\i_0\int^{+\i}_{-\i}\int^{+\i}_0\left|t^{-2}\exp \Big(-\f{\rho^2+r^2+1/2|z-l|^2}{4t}\Big)\f{\rho r}{2t}\right|\f{1}{\rho^\dl} d\rho dldt\\
&\q+ \int^{+\i}_{-\i}\int^{+\i}_0\left|\f{1}{\s{\rho r(|\rho-r|^2+1/2|z-l|^2)}}\right.\\
&\hskip 5cm\cdot\left.\int^{\f{2\rho r}{|\rho-r|^2+1/2|z-l|^2}}_0s^{-3/2}\exp (-\f{1}{s})ds\right|\f{1}{\rho^\dl} d\rho dl.\\
\end{aligned}
\ee
Continuing computations indicate that, for $\ve>0$,
\be
\begin{aligned}
J&\ls  r\int^\i_0t^{-3}\exp\big(-\f{r^2}{4t}\big)\int^{+\i}_{-\i}\int^{+\i}_0\exp \Big(-\f{\rho^2+1/2|z-l|^2}{4t}\Big)\rho^{1-\dl} d\rho dldt\\
&\q+ \int^{+\i}_{-\i}\int^{+\i}_0\f{1}{\s{\rho r(|\rho-r|^2+1/2|z-l|^2)}}\Big(\f{2\rho r}{|\rho-r|^2+1/2|z-l|^2}\Big)^{\ve}\f{1}{\rho^\dl} d\rho dl\\
&\ls  r\int^\i_0t^{-3}\exp\big(-\f{r^2}{4t}\big)t^{\f{3-\dl}{2}}dt\\
&\q\q+ r^{-1/2+\ve}\int^{+\i}_{-\i}\int^{+\i}_0\f{1}{(|\rho-r|+|z-l|)^{1+2\ve}}\f{1}{\rho^{1/2-\ve +\dl}} d\rho dl\\
&\ls  r^{-\dl}\int^\i_0e^{-s}s^{\f{\dl-1}{2}}ds\\
&\q+ r^{-1/2+\ve}\int^{+\i}_0\f{1}{|\rho-r|^{2\ve }}\f{1}{\rho^{1/2+\dl-\ve}} d\rho \\
&\ls  r^{-\dl},
\end{aligned}\nn
\ee
provided that
\be
\lt\{
\begin{aligned}
&2\ve<1,\\
&0<1/2+\dl-\ve<1,\\
&2\ve+1/2+\dl-\ve>1,
\end{aligned}
\rt.\nn
\ee
which is equal to
\be\label{3.18-1}
|\dl-1/2|<\ve<\frac{1}{2}.
\ee
When $\dl\in(0,1)$, we can choose an $\ve$ such that \eqref{3.18-1} is satisfied. \ef

\vskip 0.3 cm
\subsubsection*{ Calculation for \eqref{1.10} and \eqref{1.9} for $\boldsymbol{\dl=0}$}

Recall
\be
\G(r,\rho,z-l)=\int_0^\infty\frac{1}{4\sqrt{\pi}t^{3/2}}\exp\left(-\frac{r^2+\rho^2+(z-l)^2}{4t}\right)\mathcal{I}_1\left(\frac{r\rho}{2t}\right)dt,\nn
\ee
and by using Minkovski inequality, it follows that
\be
\begin{split}
&\q\left(\int_{-\infty}^\infty\int_0^\infty|\G(r,\rho,z-l)|^2\rho d\rho dl\right)^{1/2}\\
&\leq\int_0^\infty\left(\int_{-\infty}^\infty\int_0^\infty|G(t;r,\rho,z-l)|^2\rho d\rho dl\right)^{1/2}dt\\
&=\int_0^\infty\left(\int_{-\infty}^\infty\int_0^\infty|G(t;r,\rho,z-l)|^2 dl\rho d\rho \right)^{1/2}dt\\
&\lesssim\int_0^\infty\left(\int_{-\infty}^\infty\int_0^\infty\exp\left(-\frac{r^2+\rho^2}{2t}\right)\mathcal{I}_1^2\left(\frac{r\rho}{2t}\right)t^{-5/2} \rho d\rho \right)^{1/2}dt\\
&=\int_0^\infty t^{-3/4}\exp\left(-\frac{r^2}{8t}\right)\sqrt{\mathcal{I}_1\left(\frac{r^2}{4t}\right)}dt\\
&=\sqrt{r}\cdot\int_0^\infty s^{-3/4}\exp\left(-\frac{1}{8s}\right)\sqrt{\mathcal{I}_1\left(\frac{1}{4s}\right)}ds\\
&\lesssim\sqrt{r}.
\end{split}
\ee
Here, we have applied identity \eqref{ID2} to prove the 5th line. The last line holds because the asymptotic behavior of function $\mathcal{I}_1$ (see \eqref{1.13}) makes the the integral in this line convergent. This finish the proof of \eqref{1.10}

Next, integrating $\p_zG(t;r,\rho,z-l)$ with respect to $l$, it follows that
\be
\int_{0}^\infty|\p_zG(t;r,\rho,z-l)|dl\leq \frac{C}{t^{3/2}}\exp\left(-\frac{r^2+\rho^2}{4t}\right)\mathcal{I}_1\left(\frac{r\rho}{2t}\right).\nn
\ee
Therefore, using identity \eqref{ID1} in \textbf{Appendix}, integrating with respect to $\rho$ to get
\be
\int_0^\infty\int_{-\infty}^\infty|\p_zG(t;r,\rho,z-l)|dl d\rho\leq\frac{C}{r\sqrt{t}}\left(1-e^{-\frac{r^2}{4t}}\right).\nn
\ee
 When $\dl=0$, \eqref{1.9} is proved by integrating with $t$ on $(0,\infty)$.

\subsection{Proof of Corollary \ref{Cor1} and Corollary \ref{Cor2}}\par

\subsubsection*{Proof of Corollary \ref{Cor1}}

Using the divergence free condition of axially symmetric Navier-Stokes equation, it follows that
\be
\p_r(ru^r)+\p_z(ru^z)=0.
\ee
Then there exists a scaler function $L^\theta=L^\theta(t,r,z)$ s.t.
\be\label{psi3.2}
-\p_zL^\theta=u^r,\quad\text{and}\quad\frac{1}{r}\p_r(rL^\theta)=u^z.
\ee
Therefore, after substituting \eqref{psi3.2} in identity $\omega^\theta=\p_zu^r-\p_ru^z$, we have
\be
w^\theta=\p_zu^r-\p_ru^z=-\p^2_zL^\theta-\p_r\left(\frac{1}{r}\p_r(rL^\theta)\right)=-\left(\Delta-\frac{1}{r^2}\right)L^\theta.
\ee
That is, $L^\theta$ satisfies
\be\label{EQN3.4}
-\left(\Delta-\frac{1}{r^2}\right)L^\theta=w^\theta.
\ee
By using the Green function in\eqref{1.6} elliptic equation \eqref{EQN3.4} is solved by
\be\label{def3.6}
L^\theta(t,r,z):=\int_{-\infty}^\infty\int_0^\infty \G(r,\rho,z-l)w^\theta(t,\rho,l)\rho d\rho dl.
\ee

We claim that $L^\theta$ is uniformly bounded, the proof is: applying assumption
\be
|w^\th|\ls \f{1}{r^2},\nn
\ee
 one has
\be
\begin{split}
|L^\theta(t,r,z)|&\leq\int_{-\infty}^\infty\int_0^\infty\Gamma(r,\rho,z-l)\cdot|w^\theta(t,\rho,l)|\rho d\rho dl\\
&\lesssim\int_0^\infty\int_{-\infty}^\infty\Gamma(r,\rho,z-l)\rho^{-1}d\rho dl.\\
&\ls 1.
\end{split}
\ee
which is a direct consequence of \eqref{1.8} when we set $p=1$. Therefore, $L^\theta\in L^\infty_t(BMO(\mathbb{R}^3))$. This means
\be
b=u^re_r+u^ze_z=\nabla\times(L^\theta\cdot\mathbf{e_\theta})\in L_t^\infty(BMO^{-1}(\mathbb{R}^3)).
\ee
The conclusion in \cite{LZ2} implies the regularity of $u$. \ef

\subsubsection*{Proof of Corollary \ref{Cor2}}

Combining the first identity of \eqref{psi3.2} and \eqref{def3.6}, $u^r$ has the following representation formula
\be
u^r=-\int_{-\infty}^\infty\int_0^\infty\p_z\G(r,\rho,z-l)w^\theta(t,\rho,l)\rho d\rho dl
\ee
Therefore, as a direct consequence of \eqref{1.9} when we assume $|w^\th|\ls \f{1}{r^{1+\dl}}$, we have,
\be
 |u^r|\leq \int_{-\infty}^\infty\int_0^\infty|\p_z\G(r,\rho,z-l)|\f{1}{\rho^\dl} d\rho dl\ls \f{1}{r^\dl}.\nn
\ee
The conclusion in \cite{Px2} implies the regularity of $u$. \ef
\section{Appendix}
\subsection*{A brief introduction of modified Bessel function $\mathcal{I}_\alpha(x)$}
Modified Bessel functions $\mathcal{I}_\alpha$ (first kind) and $\mathcal{K}_\alpha$ (second kind) are two linearly independent solutions of the modified Bessel equation
\be
x^{2}{\frac {d^{2}y}{dx^{2}}}+x{\frac {dy}{dx}}-(x^{2}+\alpha ^{2})y=0.
\ee
In this article, we only consider $\mathcal{I}_1$, which satisfies the following asymptotic behavior

\be
\mathcal{I}_1\left(x\right)\lesssim
\left\{
\begin{array}{ll}
x&,\quad 0<x\leq 1;\\
\frac{e^x}{\sqrt{x}}&,\quad x>1.\\
\end{array}
\right.
\ee
We refer readers to \cite{AS1} for more details. The following lemma contains two identities related to $\mathcal{I}_1$, which was applied in Section \ref{SEC3}.
\begin{lemma}
For any $a>0$, the following identities hold
\be\label{ID1}
\int_0^\infty e^{-s^2}\mathcal{I}_1(as)ds=\frac{1}{a}\left(e^{\frac{a^2}{4}}-1\right);
\ee
\be\label{ID2}
\int_0^\infty e^{-s^2}\cdot\mathcal{I}_1^2(as)\cdot sds=\frac{1}{2} e^{\frac{a^2}{2}}\cdot\mathcal{I}_1\left(\frac{a^2}{2}\right).
\ee
\end{lemma}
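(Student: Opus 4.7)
The plan is to prove both identities by substituting the power series expansion \eqref{1.12} for $\mathcal{I}_1$ into the left-hand sides and reducing the problem to the elementary Gaussian moment integral $\int_0^\infty s^{2k+1}e^{-s^2}\,ds = k!/2$. Absolute convergence (all terms are positive) justifies swapping sums and integrals throughout, so the only nontrivial content is recognizing the resulting power series in $a$.

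For \eqref{ID1} I would substitute \eqref{1.12} and integrate termwise:
\[
\int_0^\infty e^{-s^2}\mathcal{I}_1(as)\,ds
= \sum_{m=0}^\infty \frac{(a/2)^{2m+1}}{m!(m+1)!}\int_0^\infty s^{2m+1}e^{-s^2}\,ds
= \sum_{m=0}^\infty \frac{a^{2m+1}}{2^{2m+2}(m+1)!}.
\]
Factoring out $1/a$ from each term rewrites the tail as $\sum_{m\ge 0}(a^2/4)^{m+1}/(m+1)!$, which is exactly the Taylor series of $e^{a^2/4}-1$. This gives the required closed form.

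For \eqref{ID2}, the same strategy in principle works — expand $\mathcal{I}_1^2(as)$ as the Cauchy product of two copies of \eqref{1.12}, swap the double sum and the integral, and evaluate each Gaussian moment. The obstacle is that one is then left with a double series whose coefficients do not collapse by a one-step combinatorial identity; rearranging it into $\tfrac{1}{2}e^{a^2/2}\mathcal{I}_1(a^2/2)$ essentially amounts to the Clausen-type product formula for modified Bessel functions. I would therefore either (i) invoke the classical Weber second exponential integral
\[
\int_0^\infty e^{-p t^2}\mathcal{I}_\nu(at)\mathcal{I}_\nu(bt)\,t\,dt
=\frac{1}{2p}\exp\!\left(\frac{a^2+b^2}{4p}\right)\mathcal{I}_\nu\!\left(\frac{ab}{2p}\right),
\]
specialized to $p=1$, $a=b$, $\nu=1$ and cited from Watson \cite{Watson1995} (which is already the reference used for the analogous identity in the first lemma of Section \ref{SEC3}), or (ii) give a self-contained derivation by matching coefficients of $a^{2k+2}$ on both sides after the termwise integration.

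The genuinely hard step is this second route: verifying the combinatorial identity
\[
\sum_{m=0}^{k}\frac{(k+1)!}{m!(m+1)!(k-m)!(k-m+1)!}=\frac{(2k+2)!}{(k+1)!(k+2)!\,2^{2k+1}}\cdot\frac{1}{\text{(correction)}},
\]
that comes out of matching both sides, which is most cleanly handled by recognizing the sum on the left as a Vandermonde-type convolution. Because \eqref{ID2} is a textbook result, the cleanest route is to cite Weber directly; the elementary series proof is available as a fallback if a self-contained argument is preferred. Once both identities are established, they are inserted verbatim where they were used in the weighted $L^p$ estimates in Section \ref{SEC3}, completing the appendix.
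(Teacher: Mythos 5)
Your proof of \eqref{ID1} is identical to the paper's: substitute the series \eqref{1.12}, integrate term by term against $e^{-s^2}$ using $\int_0^\infty s^{2m+1}e^{-s^2}\,ds=m!/2$, and recognize the resulting series as $\tfrac{1}{a}(e^{a^2/4}-1)$. On this point the two proofs agree line for line.

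For \eqref{ID2} the paper writes only that ``the method of the proof of \eqref{ID2} is essentially the same'' and gives no details. You are right to flag this as not quite honest: squaring the series requires the Cauchy product, which produces the coefficient $c_k=\sum_{m=0}^{k}\tfrac{1}{m!(m+1)!(k-m)!(k-m+1)!}$, and that sum does not collapse by the single-step manipulation used for \eqref{ID1}. Both of your suggested remedies are sound. The citation route is cleanest: Weber's second exponential integral
\[
\int_0^\infty e^{-p^2 t^2}\,\mathcal{I}_\nu(at)\mathcal{I}_\nu(bt)\,t\,dt
=\frac{1}{2p^2}\exp\!\left(\frac{a^2+b^2}{4p^2}\right)\mathcal{I}_\nu\!\left(\frac{ab}{2p^2}\right)
\]
with $p=1$, $a=b$, $\nu=1$ (Watson, \S 13.31) gives \eqref{ID2} at once, and Watson is already in the paper's bibliography. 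For the self-contained route, the ``correction'' you left unresolved is the Vandermonde convolution $\sum_{m}\binom{k+1}{m}\binom{k+1}{m+1}=\binom{2k+2}{k}$, equivalently $c_k=\tfrac{(2k+2)!}{k!((k+1)!)^2(k+2)!}$ (Schl\"afli's product series for $\mathcal{I}_1^2$). Once $c_k$ is put in that closed form, multiplying by the Gaussian moment $(k+1)!/2$ and comparing with the coefficient of $(a^2/2)^{k+1}$ in $\tfrac12 e^{w}\mathcal{I}_1(w)|_{w=a^2/2}$ requires yet one more convolution identity, so the citation route really is the economical one. In short: your \eqref{ID1} matches the paper; your discussion of \eqref{ID2} is more careful than the paper's one-line assertion, and either of your two routes correctly fills the gap.
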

\begin{proof}
We only prove identity \eqref{ID1}, while the method of the proof of \eqref{ID2} is essentially the same. Consider the definition of modified Bessel function (see \cite{AS1} p. 375 for details)
\be
\mathcal{I}_1(s)=\sum_{m=0}^\infty\frac{1}{m!(m+1)!}\left(\frac{s}{2}\right)^{2m+1}.
\ee
It follows that
\be
\begin{split}
\int_0^\infty e^{-s^2}\mathcal{I}_1(as)ds=&\sum_{m=0}^\infty\frac{1}{m!(m+1)!}\int_0^\infty\left(\frac{as}{2}\right)^{2m+1}e^{-s^2}ds\\
=&\frac{1}{2}\sum_{m=0}^\infty\frac{\left(\frac{a}{2}\right)^{2m+1}}{(m+1)(m!)^2}\int_0^\infty e^{-s}s^mds\\
=&\frac{1}{a}\sum_{m=0}^\infty\frac{1}{(m+1)!}\left(\frac{a}{2}\right)^{2m+2}\\
=&\frac{1}{a}\sum_{m=0}^\infty\frac{1}{(m+1)!}\left(\frac{a^2}{4}\right)^{m+1}\\
=&\frac{1}{a}\left(e^{\frac{a^2}{4}}-1\right).
\end{split}
\ee
\end{proof}
\indent

\section*{Acknowledgments}

The authors wish to thank Prof. Huicheng Yin in Nanjing Normal University and Prof. Qi S. Zhang in UC Riverside for their constant encouragement on this topic. They are also grateful to department of mathematics, UC Riverside for its hospitality during their visits.

The first author is supported by China Scholar Council (NO. 201606190089). The second author is supported by the Research Foundation of Nanjing University of Aeronautics and Astronautics (NO. 1008-YAH17070).


\end{document}